\newtheorem{theorem}{Theorem}[section]
\newtheorem{lemma}[theorem]{Lemma}
\theoremstyle{definition}
\newtheorem{example}[theorem]{Example}
\theoremstyle{remark}
\newtheorem{remark}[theorem]{Remark}
\numberwithin{equation}{section}
\begin{document}

\title {Fekete's Subadditive Lemma revisited.}
\author {L\'{a}szl\'{o}  Tapolcai  Greiner}
\subjclass[2010]{Primary 39B12; Secondary 68Q99}
\keywords{sumultiplicatiive functional,  joint contraction, Turing machine, entropy}

\begin {abstract} The subject of this paper is an extension of the  Fekete's Subadditive Lemma for a set of submultiplicative functionals on infinite product of compact spaces. Our method it can be considered as an unfolding of he ideas \cite{1}Theorem 3.1 and our main result is the extension of the symbolic dynamics results of \cite{4}.
\end{abstract}
\maketitle

 \section{Extremal submultiplicative sequences}

\subsection{ Subadditive Lemma.} To begin with we recall the Subadditive Lemma  \cite {1}.  If the sequence $\{ a_n \;\; n=1,2,\dots \}$ of real numbers is subadditive in the sense
$$a_{n+m}\leq a_n+a_m \;\; n,m=1,2,\dots ,$$
then 
$$\lim_{n\rightarrow\infty}\frac{1}{n}a_n=\inf_n\frac{1}{n}a_n$$
where $\inf_n$ means infimum. 

We shall use also a {\em multiplicative form of this lemma.}
\begin{lemma}
  Let the sequence $\{ c_n \;\; n=1,2,\dots \}$ of positive numbers be submultiplicative in the sense
$$c_{n+m}\leq c_nc_m \;\; n,m=1,2,\dots ,$$
then 
$$\lim_{n\rightarrow\infty}c_n^{\frac{1}{n}}=\inf_nc_n^{\frac{1}{n}}$$
where $\inf_n$ means infimum. 
\end{lemma}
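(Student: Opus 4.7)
The plan is to reduce this multiplicative statement to the additive Subadditive Lemma just recalled, by passing through the natural logarithm. I would set $a_n := \log c_n$, which is well-defined and finite because each $c_n > 0$; the monotonicity of $\log$ then converts the submultiplicative inequality $c_{n+m} \leq c_n c_m$ into the subadditive inequality $a_{n+m} \leq a_n + a_m$.

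Once that bridge is in place, the additive Subadditive Lemma will yield $\lim_{n\to\infty} a_n/n = \inf_n a_n/n$. Applying $\exp$ to both sides and exploiting its continuity and strict monotonicity, I would conclude $\lim_{n\to\infty} c_n^{1/n} = \inf_n c_n^{1/n}$, since exponentiation transports both limits and infima faithfully from the real line to the positive reals.

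The one point that will require a little care is the degenerate possibility $\inf_n c_n^{1/n} = 0$, which corresponds to $\inf_n a_n/n = -\infty$ and so lies just outside the usual statement of the additive lemma. I would handle it either by adopting the convention $\exp(-\infty) = 0$, or by a short direct argument: for every $\varepsilon > 0$ pick $m$ with $c_m^{1/m} < \varepsilon$, write $n = qm + r$ with $0 \leq r < m$, and use $c_n \leq c_m^q c_r$ together with the fact that $c_r$ takes values in a fixed finite set of positive numbers, so that $c_r^{1/n} \to 1$ as $n \to \infty$. The resulting estimate forces $\limsup c_n^{1/n} \leq \varepsilon$. This Euclidean-division trick is in fact the engine of the proof in either formulation, and I expect it to be the only step really warranting careful writing; the remainder is a dictionary translation between subadditive and submultiplicative sequences.
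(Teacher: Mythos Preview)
Your proposal is correct and matches the paper's approach exactly: the paper's proof is the one-line observation that $a_n=\log c_n$ is subadditive whenever $c_n$ is submultiplicative, so the multiplicative statement follows from the additive Subadditive Lemma. Your additional remarks on the degenerate case $\inf_n c_n^{1/n}=0$ and the Euclidean-division estimate go beyond what the paper writes out, but they are consistent with (and more detailed than) the paper's terse reduction.
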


\begin {proof}
 the proof follows from the obvious connection that $a_n=\log c_n$ is subadditive if $c_n$  is submultiplicative.
\end{proof}

Let  $\{X_i \;\; i=1,2,\dots\}$ be a sequence of compact spaces and consider the  product space   $\Pi_{i=1}^\infty X_i$.
We shall call {\em flow}  the elements of $\Pi_{i=1}^\infty X_i$ and  {\em word} the elements of a finite product of  $\{X_i \;\; i=1,2,\dots\}$.

More particularly, if $\sigma_i$ is an element of $X_i$, then  $\sigma_i\sigma_{i+1}\dots\sigma_j$ is a word in  $\Pi_{n=i}^j X_n$
and  $\sigma=\sigma_1\sigma_2\dots $ is a flow.

Let $\phi$ be a positive functional on the words, which is {\em submultiplicative} in the following sense. If $\sigma_i\sigma_{i+1}\dots\sigma_j$ is a word and $i<k<j$, then
$$\phi(\sigma_i\sigma_{i+1}\dots\sigma_j)\leq \phi(\sigma_i\sigma_{i+1}\dots\sigma_k)\phi(\sigma_{k+1}\sigma_{k+2}\dots\sigma_j).$$

If $\phi$ is continuous on each $\{X_i \;\; i=1,2,\dots\}$ then there is  $\sigma_1^{[n]}\dots \sigma_n^{[n]}$ for every $n$ that
$$\phi(\sigma_1^{[n]}\dots \sigma_n^{[n]})= \sup\{\phi(\sigma_1\dots\sigma_n): \;\;   \sigma\in\Pi_{i=1}^\infty X_i\}$$
since $\{X_i \;\; i=1,2,\dots\}$ are compact sets. If
\begin{equation}
\Phi_n=\phi(\sigma_1^{[n]}\dots \sigma_n^{[n]})
\end{equation}
then it is easy to verify that the sequence $\{\Phi_n \;\; n=1,2,\dots\}$ of positive numbers  is submultiplicative. Hence, it follows from the Submultiplicative Lemma, that $\{\Phi_n^{\frac{1}{n}} \;\; \;\, n=1,2,\dots\}$ is convergent and the limit is the {\em infimum} of  $\{\Phi_n^{\frac{1}{n}} \;\; \;\, n=1,2,\dots\}$.

 Notice, that the words  $\{ \sigma_1^{[n]}\dots \sigma_n^{[n]}  \;\; \;\; n=1,2,\dots \},$  where $\phi$ reach its maximum in $\{X_n  \;\; n=1,2,\dots \},$  {\em  are not the prefix of a single flow} $\sigma=\sigma_1\sigma_2\dots \;\; \;\; (\sigma_i\in X_i )$.

Let $\Phi_*$ be the limit of  $\{\Phi_n^{\frac{1}{n}} \;\; \;\, n=1,2,\dots\}$. For every flow  $\sigma\in\Pi_{i=1}^\infty X_i$, 
 there is also a limit
$$\phi_*(\sigma)= \lim_{n\rightarrow\infty}\phi(\sigma_1\dots\sigma_n)^{\frac{1}{n}}$$ 
obviously,  $\phi_*(\sigma)\leq \Phi_*$.

{\em Our main result is that there exists $\sigma$ with  $\phi_*(\sigma)=\Phi_*$.}
\begin {theorem}
 There is  $\sigma\in \Pi_{i=1}^\infty X_i$ with  
\begin{equation}
\phi (\sigma_1\sigma_2\dots\sigma_n)\geq\Phi_*^n
\end{equation}
  for $n=1,2,\dots$  i.e. (1.2) holds for every prefix of $\sigma$.
\end{theorem}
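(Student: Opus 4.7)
I would combine Tychonoff compactness of $\prod X_i$ with the submultiplicative inequality for $\phi$ applied to the extremal words $\sigma^{[N]}$. Set $g_N:=\log\Phi_N-N\log\Phi_*$; then $g_N\geq 0$ (since $\Phi_*$ is the infimum of $\Phi_n^{1/n}$), $g_N$ is subadditive (from submultiplicativity of $\Phi_n$), and $g_N/N\to 0$. For each $N$, extend $\sigma^{[N]}$ to an arbitrary flow $\tau^{(N)}\in\prod X_i$; splitting at position $m\leq N$ and using the definition of $\Phi_{N-m}$ as a supremum, submultiplicativity yields
$$\Phi_N\leq\phi(\sigma_1^{[N]}\cdots\sigma_m^{[N]})\cdot\Phi_{N-m},$$
so $\phi(\sigma_1^{[N]}\cdots\sigma_m^{[N]})\geq\Phi_N/\Phi_{N-m}=\Phi_*^m\exp(g_N-g_{N-m})$.

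The central step would be the following window-max lemma: for every $M\geq 1$ and $\delta>0$ there exist arbitrarily large $N$ with $g_N\geq g_{N-m}-\delta$ for every $m=1,\ldots,M$, equivalently $\Phi_N/\Phi_{N-m}\geq\Phi_*^m e^{-\delta}$ for all such $m$. I would prove this by contradiction: if $g_N<\max_{1\leq m\leq M}g_{N-m}-\delta\leq F_{N-1}-\delta$ for all $N\geq N_0$, where $F_N:=\max_{n\leq N}g_n$, then $F_N=F_{N-1}$ for $N\geq N_0$, so $F_N\equiv C$ on the tail. Iterating the rolling-window inequality then gives $g_N<C-k\delta$ whenever $N\geq N_0+kM$, eventually contradicting $g_N\geq 0$.

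Granting the lemma with $(M,\delta)=(k,1/k)$, select an increasing sequence $N_k$ such that $\Phi_{N_k}/\Phi_{N_k-m}\geq\Phi_*^m e^{-1/k}$ for all $m\leq k$; by Tychonoff, extract a subsequence with $\tau^{(N_{k_\ell})}\to\sigma\in\prod X_i$. For each fixed $m$, continuity of $\phi$ on the first $m$ coordinates combined with the Step~1 bound gives
$$\phi(\sigma_1\cdots\sigma_m)=\lim_\ell\phi(\sigma_1^{[N_{k_\ell}]}\cdots\sigma_m^{[N_{k_\ell}]})\geq\liminf_\ell\Phi_*^m e^{-1/k_\ell}=\Phi_*^m,$$
which is (1.2) for every $m$.

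\textbf{Main obstacle.} The classical Fekete contradiction readily gives $\limsup_N\Phi_N/\Phi_{N-m}\geq\Phi_*^m$ one $m$ at a time, but after a Tychonoff extraction the limsup witnesses for some $m$ may be destroyed. The window-max lemma is the uniform strengthening that allows a single subsequence $N_k$ to control every $m\leq k$ at once; its running-maximum contradiction is where the proof does its real work, after which compactness and the diagonal finish mechanically.
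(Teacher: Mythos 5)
Your argument is correct, and it follows a genuinely different route from the paper's. The paper introduces the nested compact sets $\mathcal{T}_n$ of (1.3) and shows each is nonempty by contradiction: if $\mathcal{T}_n=\emptyset$, compactness and continuity give a uniform $\varepsilon>0$ such that every flow has a short prefix with $\phi^{1/k}\le\Phi_*-\varepsilon$; cutting the maximal words of length $m\gg n$ into such deficient blocks yields $\Phi_m^{1/m}\le C^{1/m}(\Phi_*-\varepsilon)^{k_l/m}$, hence the contradiction $\Phi_*\le\Phi_*-\varepsilon$, and then the finite intersection property produces $\sigma$. You instead build $\sigma$ out of the maximizers themselves: the bound $\phi(\sigma_1^{[N]}\cdots\sigma_m^{[N]})\ge\Phi_N/\Phi_{N-m}=\Phi_*^m e^{g_N-g_{N-m}}$ shows that prefixes of $\sigma^{[N]}$ are nearly extremal whenever $g_N$ nearly dominates the preceding window, and your window-max lemma --- whose running-maximum proof is sound and, notably, uses only $g_N\ge 0$, not subadditivity --- supplies arbitrarily large such $N$ for every window length $M$ and tolerance $\delta$; a compactness limit of the extended maximizers then gives (1.2) for every $m$. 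What your route buys: it is more quantitative, exhibiting explicit finite words (prefixes of maximizers at well-chosen lengths) that satisfy (1.2) up to a factor $e^{-\delta}$; it needs continuity only for the existence of maximizers and the final limit, avoiding the paper's uniform-$\varepsilon$ compactness step; and it isolates the combinatorial core of Fekete's argument in a lemma about arbitrary nonnegative sequences. What the paper's route buys: it is shorter, and its nested-set formulation carries over verbatim to the shift-invariant setting of Theorem 1.4.

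Two points to tidy up, neither fatal. First, your inequality $\Phi_N\le\phi(\sigma_1^{[N]}\cdots\sigma_m^{[N]})\,\Phi_{N-m}$ bounds a word occupying positions $m+1,\dots,N$ by $\Phi_{N-m}$, which is a supremum over words at positions $1,\dots,N-m$; since the $X_i$ (and $\phi$) may depend on the position, this needs the same implicit homogeneity that the paper itself invokes when it asserts that $\{\Phi_n\}$ is submultiplicative and when, in its own proof, it bounds interior blocks by $\Phi_*^{k_{j+1}-k_j}$ using the prefix statement (1.4) --- so you are at the paper's level of rigor here, but you should state the needed hypothesis explicitly (identical $X_i$ with position-independent $\phi$, or $\Phi_n$ taken as a supremum over all starting positions). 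Second, Tychonoff gives compactness, not sequential compactness, so for non-metrizable $X_i$ replace the convergent subsequence by a cluster point of $\{\tau^{(N_k)}\}$ (or conclude, as the paper does, via closed sets and the finite intersection property); your continuity argument for fixed $m$ is unchanged. Finally, note the trivial case $\Phi_*=0$, where any flow satisfies (1.2) and the normalization by $\log\Phi_*$ is not needed.
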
 

\begin{proof}
 Consider the nested sequence
\begin{equation}
\mathcal {T}_n=\{\sigma_1,\sigma_2\dots\sigma_n:  \;\; \phi(\sigma_1,\sigma_2\dots\sigma_k )\geq\Phi_*^k \;\; \;\; k=1,2,\dots n\}
\end{equation}
of compact sets. If  $\mathcal {T}_n \neq\emptyset$ for every $n$, then by the finite intersection property, 
$$\bigcap_{n=1}^\infty\mathcal {T}_n \neq\emptyset$$

and each $\sigma$ in  $\cap_{n=1}^\infty\{\mathcal {T}_n \;\; n=1,2,\dots\}$ satisfies (1.2) for $n=1,2,\dots$.

Hence, if we prove that each set $\mathcal {T}_n$ is not empty then we have done.\\

Suppose that there is $n$ that $\mathcal {T}_n=\emptyset$ and we get  to a contradiction. If  there is $n$ that $\mathcal {T}_n=\emptyset$ then for every $\sigma$ there is  $k<n$ that
\begin{equation}
 \phi (\sigma_1\sigma_2\dots\sigma_k)<\Phi_*^k.
\end{equation}
and hence
 $$\phi (\sigma_1\sigma_2\dots\sigma_k)^\frac{1}{k}\leq\Phi_*-\varepsilon$$
since $X_i$ is compact and  $\phi$, restricted to $X_i$, is continuous.

 Every subword of $\sigma$ of length $n$ contains index $k$ with (1.4). Hence  for every $\sigma$ and $m>n$
$$\phi (\sigma_1\sigma_2\dots\sigma_m)\leq \phi (\sigma_1\sigma_2\dots\sigma_{k_1})\phi (\sigma_{k_1+1}\sigma_{ k_1+2}
\dots\sigma_{k_2})\dots\phi(\sigma_{k_l+1}\sigma_{k_l+2}\dots\sigma_m) $$
where
$$\phi (\sigma_{k_j+1}\sigma_{ k_j+2}\dots\sigma_{k_{j+1}})<\Phi_*^{k_{j+1}-k_j} \;\; \;\, j=1,\dots  ,l .$$
It follows
$$\phi (\sigma_1\sigma_2\dots\sigma_n)\leq C(\Phi_*-\varepsilon)^{k_l}$$
where $C=\max\{1, \Phi_1^n\}$,   \ $k_i<n$ and  $m-n<k_l<m$.\\
          
  For every $m>n$ consider  the maximal  word $\Phi_m$. I.e  let $(\sigma_1^{[m]}\dots \sigma_m^{[m]}$ be the word
satisfied (1.1). Then
\begin{equation}
\Phi_m^{\frac{1}{m}}\leq C^{\frac{1}{m}}(\Phi_*-\varepsilon)^{\frac{k_l}{m}}
\end{equation}
and (1.5) is valid also  if  $m\rightarrow\infty$. Thus the contradiction \  $\Phi_*\leq\Phi_*-\varepsilon$ \   is obtained and hence
 $\mathcal {T}_n$ is not empty.
\end{proof}

\begin{remark}
 An important case is when  $\{X_i \;\; i=1,2,\dots\}$ are finite sets. In this case we can drop the continuouty conditions, obviously.

That the theorem is not trivial also in this case, shows the next example.

Let 
\begin{equation}
A_0=\left( \begin{array}{cc} 0.3 & 0 \\ 0 & 0.3 \end{array}\right) \;\; \;\; A_1=\left( \begin{array}{cc}100 & 0 \\ 0 & 100 \end{array}\right)
\end{equation}
and
$$\phi(\sigma_1\sigma_2\dots\sigma_n)=\|A_{\sigma_1}A_{\sigma_2}\dots A_{\sigma_n}\| \;\; \;\, \sigma_i\in \{0, 1\}.$$
 Then $\phi$ is submultiplicative and nomalized the unit matrix, we have
$$\|\left( \begin{array}{cc} a & 0 \\ 0 & a \end{array}\right)\|=a$$
and hence for the periodic sequence
\begin{equation}
\|A_0A_1\dots A_0A_1\|=30^{2n}.
\end{equation} 
It follows that (1.7) is  $\Phi_n$ for every $n$ and $\Phi_*=30$.  

However, considered the set 
\begin{equation}
\|A_0A_1\dots  A_0A_1\dots A_0\dots\|=30^{2m}0.3^{n-2m} \;\; m=1,2,\dots
\end{equation} 
i.e. periodic product till $2m$ ended by all $A_0$,
we can findfrom this set $\Phi_m$  for every even $m$, however each infinit product of (1.8) tends to zero.
\end{remark}
\begin{theorem}  If we restrict to a closed subset $\mathcal{K}$  of $\Pi_{i=1}^\infty X_i$ then our  theorem  remains valid  if 
$\mathcal {K}$ is {\em shift-invariant} in the sense
$$\sigma_1\sigma_2\dots\sigma_n\dots \in\mathcal {K}\Rightarrow \sigma_2\sigma_3\dots\sigma_{n+1}\dots \in\mathcal {K}.$$
Thus, as we shall see by the next examples, our theorem will be  more applicable.
\end {theorem}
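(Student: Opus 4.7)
The plan is to retrace the proof of Theorem 1.2 verbatim, replacing each global quantity by its $\mathcal{K}$-restricted analogue and invoking shift-invariance at the two steps where the ambient product space was used. First I would set
\[\Phi_n^{\mathcal{K}} = \sup\{\phi(\sigma_1\dots\sigma_n) : \sigma \in \mathcal{K}\},\]
which is attained because $\mathcal{K}$ is closed, hence compact, in $\Pi_{i=1}^\infty X_i$, and $\phi$ is continuous on each coordinate. Writing $\Phi_*^{\mathcal{K}}$ for the eventual limit, I would then define the restricted nested compact sets
\[\mathcal{T}_n^{\mathcal{K}} = \{\sigma \in \mathcal{K} : \phi(\sigma_1\dots\sigma_k) \geq (\Phi_*^{\mathcal{K}})^k,\ k = 1,\dots,n\}\]
and rely on the finite intersection property inside the compact $\mathcal{K}$ to produce the desired $\sigma \in \mathcal{K}$, exactly as before, once I have shown every $\mathcal{T}_n^{\mathcal{K}}$ is nonempty.

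The first place shift-invariance enters is in checking that $\{\Phi_n^{\mathcal{K}}\}$ is submultiplicative, so that Lemma 1.1 delivers $\Phi_*^{\mathcal{K}} = \lim(\Phi_n^{\mathcal{K}})^{1/n} = \inf_n (\Phi_n^{\mathcal{K}})^{1/n}$. Taking $\sigma \in \mathcal{K}$ that realizes $\Phi_{n+m}^{\mathcal{K}}$, submultiplicativity of $\phi$ gives
\[\phi(\sigma_1\dots\sigma_{n+m}) \leq \phi(\sigma_1\dots\sigma_n)\,\phi(\sigma_{n+1}\dots\sigma_{n+m});\]
the first factor is bounded by $\Phi_n^{\mathcal{K}}$ because $\sigma \in \mathcal{K}$, and the second by $\Phi_m^{\mathcal{K}}$ precisely because the $n$-fold shift of $\sigma$ is again in $\mathcal{K}$ by iterating the hypothesis. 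Without shift-invariance, the prefix $\sigma_{n+1}\dots\sigma_{n+m}$ need not extend to any flow in $\mathcal{K}$, and submultiplicativity of the restricted suprema could fail.

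The contradiction that each $\mathcal{T}_n^{\mathcal{K}}$ is nonempty then proceeds as in Theorem 1.2: if some $\mathcal{T}_n^{\mathcal{K}} = \emptyset$, then for every $\sigma \in \mathcal{K}$ there exists $k \leq n$ with $\phi(\sigma_1\dots\sigma_k)^{1/k} \leq \Phi_*^{\mathcal{K}} - \varepsilon$, with a uniform $\varepsilon > 0$ by compactness of $\mathcal{K}$ and continuity of $\phi$. Applying this to the maximizer of $\Phi_m^{\mathcal{K}}$ and to each successive shift of it, the block decomposition of the original proof produces the estimate
\[(\Phi_m^{\mathcal{K}})^{1/m} \leq C^{1/m}(\Phi_*^{\mathcal{K}} - \varepsilon)^{k_\ell/m},\]
contradicting $\Phi_*^{\mathcal{K}} = \lim(\Phi_m^{\mathcal{K}})^{1/m}$ as $m \to \infty$. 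The main obstacle, and the essential use of the hypothesis, is this iterated invocation of shift-invariance: one must ensure that every suffix cut out at the points $k_j$ is itself a flow in $\mathcal{K}$, so that the uniform defect $\varepsilon$ is legitimately applicable to it; closedness plus shift-invariance of $\mathcal{K}$ supplies exactly this.
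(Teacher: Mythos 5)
Your proposal is correct and follows exactly the route the paper intends: its proof of this theorem is literally ``the same as Theorem 1.2, with obvious modifications,'' and you have spelled out precisely those modifications (the $\mathcal{K}$-restricted suprema, compactness of the closed subset, and the two places --- submultiplicativity of $\Phi_n^{\mathcal{K}}$ and the suffix/block estimates in the emptiness contradiction --- where shift-invariance is invoked). In fact your write-up is more explicit than the paper's own one-line proof, and correctly identifies why shift-invariance is the essential hypothesis.
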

\begin {proof}
The proof is the same  as  Theorem 1.2,  with obvious modifications,.
\end{proof}
\begin{remark} 
 The subset of $\mathcal {K}$ with  $\phi_*(\sigma)=\Phi_*$ form a closed shift-invariant subspace.\\
\end{remark}
\begin {theorem}
 If $\sigma\in\mathcal {K}$ satisfies (1.3) then
$$\Phi_n^{\frac{1}{n}}\geq \phi(\sigma_1,\sigma_2\dots\sigma_n )^{\frac{1}{n}}\geq\Phi_* \;\; \;\;n=1,2,\dots.$$
and hence 
\begin{equation}
  \phi(\sigma_1,\sigma_2\dots\sigma_n )^{\frac{1}{n}}\rightarrow\Phi_*.
\end{equation}
It is easy to verify that also $ (1.2)\Leftrightarrow (1.6)$ holds.
\end{theorem}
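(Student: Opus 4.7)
The plan is to sandwich $\phi(\sigma_1\sigma_2\dots\sigma_n)^{1/n}$ between $\Phi_n^{1/n}$ above and $\Phi_*$ below, and then let $n\to\infty$. Both inequalities are essentially tautological given what has already been established, so the proof should reduce to two short observations plus an appeal to Lemma~1.1.

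First, I would obtain the upper bound. By the defining property (1.1), the quantity $\Phi_n$ is the supremum of $\phi$ over all length-$n$ prefixes arising from flows in $\mathcal{K}$ (the shift-invariance of $\mathcal{K}$ from Theorem~1.4 guarantees that maxima are attained within $\mathcal{K}$). Hence the trivial bound $\phi(\sigma_1\sigma_2\dots\sigma_n)\leq\Phi_n$ holds for every $\sigma\in\mathcal{K}$. Raising to the $1/n$-th power gives $\phi(\sigma_1\sigma_2\dots\sigma_n)^{1/n}\leq\Phi_n^{1/n}$, and Lemma~1.1 applied to the submultiplicative sequence $\{\Phi_n\}$ yields $\Phi_n^{1/n}\to\Phi_*$.

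Second, the lower bound $\phi(\sigma_1\sigma_2\dots\sigma_n)^{1/n}\geq\Phi_*$ is a direct rewriting of the hypothesis: the assumption that $\sigma$ lies in $\bigcap_n\mathcal{T}_n$ means exactly $\phi(\sigma_1\sigma_2\dots\sigma_n)\geq\Phi_*^n$ for all $n$, which is (1.2). The squeeze theorem then forces
\[
\phi(\sigma_1\sigma_2\dots\sigma_n)^{1/n}\longrightarrow\Phi_*,
\]
proving (1.9).

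Finally, the claimed equivalence with (1.2). The forward direction is what was just shown. For the converse, assume $\phi(\sigma_1\dots\sigma_n)^{1/n}\to\Phi_*$ for some $\sigma\in\mathcal{K}$; I would try to derive the pointwise bound (1.2) by combining submultiplicativity with the shift-invariance of $\mathcal{K}$. Concretely, for a fixed prefix length $m$, submultiplicativity gives $\phi(\sigma_1\dots\sigma_{m+k})\leq\phi(\sigma_1\dots\sigma_m)\,\phi(\sigma_{m+1}\dots\sigma_{m+k})$; taking $(m+k)$-th roots, letting $k\to\infty$, and using that the suffix is again a prefix of a shifted flow in $\mathcal{K}$ should pin down $\phi(\sigma_1\dots\sigma_m)\geq\Phi_*^m$. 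This asymptotic-to-pointwise step is the only place where something nontrivial happens, and I expect it to be the main obstacle; the estimate is delicate because the right-hand factor only controls $\Phi_*$ in the limit, so one has to be careful to avoid losing a subexponential correction that would ruin the pointwise bound. Invoking the shift-invariance of $\mathcal{K}$ together with a selection of a limiting flow attaining $\Phi_*$ on its suffixes should close this gap.
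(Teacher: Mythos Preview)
Your argument for the sandwich inequality and the convergence $(1.9)$ is correct and is exactly what the paper does: its entire proof reads ``The theorem follows from the submultiplicativity of sequence $\{\Phi_n\}$,'' which is just your observation that $\Phi_n^{1/n}\to\Phi_*$ by Lemma~1.1 combined with the two trivial bounds.

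The gap is in your attempted converse $(1.9)\Rightarrow(1.2)$. This implication is \emph{false} at the level of individual flows, so no amount of care with the subexponential corrections will close it. Take $X_i=\{0,1\}$ with $\phi$ purely multiplicative, $\phi(0)=1$, $\phi(1)=2$; then $\Phi_n=2^n$ and $\Phi_*=2$. For $\sigma=0111\ldots$ one has $\phi(\sigma_1\ldots\sigma_n)=2^{n-1}$, hence $\phi(\sigma_1\ldots\sigma_n)^{1/n}\to 2=\Phi_*$, yet $\phi(\sigma_1)=1<\Phi_*$, so $(1.2)$ fails at $n=1$. Your submultiplicativity estimate $\phi(\sigma_1\ldots\sigma_m)\ge\phi(\sigma_1\ldots\sigma_{m+k})/\Phi_k$ only yields, after letting $k\to\infty$, the vacuous bound
\[
\phi(\sigma_1\ldots\sigma_m)\ \ge\ \frac{(\Phi_*-\varepsilon)^{m+k}}{(\Phi_*+\varepsilon)^{k}}\ \longrightarrow\ 0,
\]
and shift-invariance of $\mathcal{K}$ does not rescue this.

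Note too that the paper's own proof does not touch the equivalence, and the label ``$(1.6)$'' it cites literally points to the matrix definition inside Remark~1.3 --- almost certainly a misnumbering. Whatever the intended reading, the pointwise converse you outlined is not a theorem.
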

\begin{proof} The theorem follows from the  submultiplicativity of sequence $\{\Phi_n \;\; n=1,2,\dots\}$.
\end{proof}

\section{Examples towards possible applications}

Each of the following example serves as a hint for possible application.
\begin{example}
Let  $\{X_i \;\; i=1,2,\dots\}$ be  bounded sets  $\{\mathcal {M}_i \;\;  i=1,2,\dots \}$ of square matrices with equal size, moreover,
$$\phi(\sigma_1\sigma_2\dots\sigma_k)=\|A_{\sigma_1}A_{\sigma_2}\dots A_{\sigma_n}\|$$
where $A_{\sigma_i}\in\mathcal {M}_{\sigma_i}$.  I.e.  a flow is represented by an infinite product of matrices.

In this setup we obtain the following\\
\\
{\em Proposition.} The infinite product
$$A_{\sigma_1}A_{\sigma_2}\dots  A_{\sigma_k}\dots  \;\; \;\; (A_{\sigma_k}\in\mathcal {M}_{\sigma_k})$$
tends to $0$ for every $\sigma$ if and only if there exists $N$ that
\begin{equation}
\|A_{\sigma_1}A_{\sigma_2}\dots A_{\sigma_N}\|<1
\end {equation}
 for every $\sigma$. \\

 In this case
$$\Phi_*= \lim_{n\rightarrow\infty} \|A_{\sigma_1}A_{\sigma_2}\dots A_{\sigma_n}\|^{\frac{1}{n}}$$
called  {\em the joint spectral radius} \cite{1}.
\end{example}
\begin{example}
 The finite set $\{F_1, F_2\dots F_N\}$  of functions of a metric space $X$ is called {\em joint contraction} if there is $M$ such that every composition 
\begin{equation} 
F_{\sigma_1}\circ F_{\sigma_2}\circ\dots\circ F_{\sigma_M} \;\; \;\; \sigma_i\in\{1,2,\dots ,N\}
\end{equation}
of length $M$ is a contraction.\\
{\em Proposition}  Let  $\{F_1, F_2\dots F_N\}$ be functions of a compact metric space $X$ with Lipschitz constant
$$\sup\frac{d[F_i(x)-F_i(y)]}{d[x-y]}= r_i \;\; \;\; i=1,2,\dots ,N.$$
Then the sequence  $\{F_{\sigma_1}\circ F_{\sigma_2}\circ\dots\circ F_{\sigma_n}(x) \;\; \;\; \sigma_i\in\{1,2,\dots ,N\}\}$ is convergent for every $x\in X$ and the limit is independent of $x$ if and only if $\{F_1, F_2,\dots , F_N\}$ is joint contraction.
\begin{proof}
 The "if" part can be proved in the same way as the Contraction Mapping Theorem.  The " only if " part follows from
 Theorem 1.2  In fact, the Lipschitz constant is submultiplicative.
 If $F$ and $G$ are Lipschitz function with Lipschitz constant $r_F$ and $r_G$ then 
$$\frac{ d[F\circ G(x), F\circ G(y)]}{d[x, y]}\leq \frac{ d[F\circ G(x), F\circ G(y)]}{ d[ G(x),  G(y)]} \;\; \frac{ d[G(x), G(y)]}{d[x, y]}\leq r_Fr_G.$$
It follows, that if $\phi(\sigma_1\sigma_2\dots\sigma_n)$ is the Lipschitz constant of  $F_{\sigma_1}\circ F_{\sigma_2}\circ\dots \circ F_{\sigma_n}$ then $\phi$ is submultiplicative and  hence there is a flow $\sigma$ that
$$\sup_{x,y}\frac{d[F_{\sigma_1}\circ F_{\sigma_2}\circ\dots\circ F_{\sigma_n}(x), F_{\sigma_1}\circ F_{\sigma_2}\circ\dots\circ F_{\sigma_n}(y)]}{d[x, y]}\geq \Phi_*^n$$
 from Theorem 1.2.

 If $\{F_1, F_2\dots F_N\}$ is NOT joint contraction, then  $\Phi_n\geq 1$ for every $n$ and hence $\Phi_*\geq 1$. It follows that there is a flow $\sigma^*$ such that the  Lipschitz constant  of  $F_{\sigma_1^*}\circ F_{\sigma_2^*}\circ\dots\circ F_{\sigma_n^*}$ is  equal or greater then $1$.  I.e. for every $n$ we have $x'_n , y'_n\in X$ that
$$\frac{d[F_{\sigma_1}\circ F_{\sigma_2}\circ\dots\circ F_{\sigma_n}(x'_n), F_{\sigma_1}\circ F_{\sigma_2}\circ\dots\circ F_{\sigma_n}(y'_n)]}{d[x'_n, y'_n]}\geq \Phi_*^n.$$
Consider the sequences $\{x'_n \;\; n=1,2,\dots \}$ and  $\{y'_n \;\; n=1,2,\dots \}$. Since $X$ is compact, we have convergent subsequences with limit $x'$ and $y'$ and after  a straightforward calculation we obtain also
$$\frac{d[F_{\sigma_1}\circ F_{\sigma_2}\circ\dots\circ F_{\sigma_n}(x'), F_{\sigma_1}\circ F_{\sigma_2}\circ\dots\circ F_{\sigma_n}(y')]}{d[x', y']}\geq \Phi_*^n.$$
It follows that the convergence of 
$$\{F_{\sigma_1}\circ F_{\sigma_2}\circ\dots\circ F_{\sigma_n}(x) ; \;\; n=1,2,\dots\}$$
depends on $x$, moreover, the squence tend to infinity in an exponential rate $\Phi_*^n$ if $\Phi_*>1$.
\end{proof}  
\begin {remark}
 If  $\{F_1, F_2,\dots , F_N\}$ is NOT  joint contraction, then often we can choose a subset $\mathcal {K}$  that  the 
 sequences  $\{F_{\sigma_1}\circ F_{\sigma_2}\circ\dots\circ F_{\sigma_n}(x) \;\; \;\;n=1,2,\dots \}$, corresponded to
 $\mathcal {K}$, are convergent with limit independent of $x$. 

The set $\mathcal {K}$ is obtained by to select words $\sigma_i\sigma_{i+1}\dots\sigma_j$ and considered those sequences  which do not contain $F_{\sigma_i}\circ F_{\sigma_{i+1}}\circ\dots\circ F_{\sigma_j}$.  Thus for the infinite compositions belonging to  $\mathcal {K}$ there is $M$ that every composition (2.2) is a contraction.
 \end{remark}
\end{example}
\begin{example}
\cite {3} Let $\sigma$ be the input of a program $P$ of a Turing machine and $s_n(\sigma)$ be the number of {\em different} cells visited during $n$ step of the execution of $\sigma$.   We are interested in the input $\sigma$ that takes the average
$$\frac{s_n(\sigma)}{n}$$
maximal after long time. There is subadditivity
 $$s_{k+m}(\sigma)\leq s_k(\sigma)+s_m(P^k(\sigma))$$ 
where  $P^k(\sigma)$ is the output after  the $k$-th step of the execution of  $\sigma$ ($\sigma$ is a string  with finite alphabet).  Hence, applied  Theorem 1.2  when   $\phi(\sigma_1\sigma_2\dots\sigma_n)=s_n(\sigma)$,  there exists  input  $\sigma$ with
 $$\frac{s_n(\sigma)}{n}\geq \Phi_*  \;\; \;\; n=1,2,\dots$$
and this is also the desired maximum.
\end{example}
\begin{example}
\cite {5} Let $(X, d)$ be a compact metric space and $T: \; X\Rightarrow X$ a continuous map. For every open cover $\mathcal {U}$ of $X$  there is a finite subcover $\mathcal {V}$, since $X$ is compact. Define $N(\mathcal {U})$ as the minimal number of a subcover.

If $\mathcal{U}, \mathcal  {V}$ are open covers, then
$$\mathcal{U}\vee\mathcal{ V}=\{u_i\bigcap v_j ; \; u_i\in\mathcal{U},  v_j\in\mathcal {V} \}.$$
It easy to check that $N(\bullet)$ is subadditive in the following sense
$$X(\mathcal{U}\vee\mathcal {V})\leq N(\mathcal{U})+N(\mathcal {V}).$$
The {\em topological entropy} $h(T \mathcal{U})$  is defined by these concepts as follows
$$h(T, \mathcal{U})=\lim_{n\rightarrow\infty}\frac{1}{n}\log N(\mathcal{U}_0^{n-1})$$
where 
$$ N(\mathcal{U}_0^{n-1})= \vee_{k=1}^{n-1}T^{[-k]}\mathcal{U}.$$
The limit exists because of the subadditivity of $N(\bullet)$ with $a_n=\log N(\mathcal{U}_0^{n-1})$.\\

The {\em topological entropy $h(T)$ of $T$} is the supremum of $\{h(T, \mathcal{U}) \;\; \mathcal{U}$ is an open cover of $X \}$.

If there is a compact topology of the minimal covers $\mathcal {V}$ induced by the open sets of $X$ such that 
$$\mathcal {V}\Longrightarrow N(\mathcal{U})$$
is continuous, then there is a cover $\mathcal{U}$ of the compact space $X$ that $h(T)=h(T, \mathcal{U})$.
\end{example}

\bibliographystyle{amsplain}
\begin{thebibliography}{6}

\bibitem{1} Daubechies, I. and Lagarias, J.C., \textit{Sets of matrices all infinite products of which converge} Linear Algebra Appl. {\bf 161} (1992), 239.

\bibitem{2} Fekete, Michael, \textit{ Uber der Verteilung der Wurzeln bei gewissen algebraischen Gleichungen mit  ganzzahligen Koeffizienten}  Mathematische Zeitschrift {\bf 17}  (1923),  228-249.
\bibitem{3} Jeandel, Emmanuel, \textit{Computability of the Entropy of one-tape Turing Machine} arXiv: 1302.1170 (2013) 
\bibitem{4} Mate, L\'{a}szl\'{o}, \textit{On infinite composition of affine mappings} Fundamenta Mathematicae {\bf 159} (1999) 85-90
\bibitem{5} Sarig, Omri,  \textit{Lecture Notes on Ergodic Theory}  Weizman Institute, Israel  (2009) 
\\ 
\end {thebibliography}

 \curraddr{Institute of Mathematics, Budapest University of Technology and Economics}

\email{\it E-mail address: mate@math.bme.hu}

\end{document}